\theoremstyle{plain}
\newtheorem{lem}{Lemma}[section]
\newtheorem{cor}[lem]{Corollary}
\newtheorem{prop}[lem]{Proposition}
\newtheorem{thm}[lem]{Theorem}
\theoremstyle{definition}
\newtheorem{construction}[lem]{Construction}
\newtheorem{para}[lem]{}
\newtheorem*{convention*}{Convention}
\newcommand{\pdim}{\operatorname{pd}}	
\newcommand{\pd}{\operatorname{pd}}
\newcommand{\id}{\operatorname{id}}	
\newcommand{\fd}{\operatorname{fd}}
\newcommand{\cidim}{\mathrm{CI}\text{-}\!\dim}
\newcommand{\DD}{\operatorname{D}}
\newcommand{\HH}{\operatorname{H}}
\newcommand{\spec}{\operatorname{Spec}}
\newcommand{\aq}{\operatorname{AQ-dim}}
\newcommand{\Ker}{\operatorname{Ker}}
\newcommand{\ideal}[1]{\mathfrak{#1}}
\newcommand{\m}{\ideal{m}}
\newcommand{\n}{\ideal{n}}
\newcommand{\p}{\ideal{p}}
\newcommand{\q}{\ideal{q}}
\newcommand{\fm}{\ideal{m}}
\newcommand{\fa}{\ideal{a}}
\newcommand{\fr}{\ideal{r}}
\newcommand{\bbn}{\mathbb{N}}
\newcommand{\from}{\leftarrow}
\newcommand{\xra}{\xrightarrow}
\newcommand{\xla}{\xleftarrow}
\newcommand{\vf}{\varphi}
\renewcommand{\geq}{\geqslant}
\renewcommand{\leq}{\leqslant}
\renewcommand{\ker}{\Ker}
\newcommand{\Otimes}[3][R]{#2\otimes_{#1}#3}
\newcommand{\cfd}{\operatorname{CI-fd}}
\newcommand{\cfdd}{\operatorname{CI*-fd}}
\newcommand{\cdim}{\operatorname{CI\text{-}dim}}
\newcommand{\ciddim}{\operatorname{CI*\text{-}dim}}
\newcommand{\cid}{\operatorname{cidim}}
\newcommand{\cidd}{\operatorname{cidim-fd}}
\numberwithin{equation}{lem}
\begin{document}

\bibliographystyle{amsplain}

\title[Andr\'e-Quillen homology \& complete intersection flat dimensions]{Andr\'e-Quillen homology and ascent of complete intersection flat dimensions}

\author{Keri Ann Sather-Wagstaff}
\address{K.~A.~Sather-Wagstaff,
Framingham State University, 100 State Street
PO Box 9101
Framingham, MA 01701, USA}
\email{ksatherwagstaff@framingham.edu}

\author{Tirdad Sharif}
\address{T.~Sharif, School of Mathematics,
Institute for Research in Fundamental Sciences (IPM),
P.~O.~Box 19395-5746, Tehran, Iran.}
\email{sharif@ipm.ir}

\keywords{Andr\'{e}-Quillen homology, complete intersection dimension, regular factorization, separability.}
\subjclass[2010]{Primary: 13D03, 13D05; Secondary: 13B10, 14B25.}

\begin{abstract}
Using Andr\'{e}-Quillen homology, we prove an ascent result for different types of complete intersection flat dimensions along an essentially of finite type flat local homomorphism with complete intersection closed fiber. As an application of our result, we extend a result of Majadas Soto and of Avramov, Henriques and \c{S}ega on the Andr\'{e}-Quillen dimension of surjective local homomorphisms to that of essentially of finite types.
\end{abstract}

\maketitle

\section{Introduction}
\label{introduction}

In this paper, all rings are commutative and Noetherian; $(R, \fm, k)$, $(S,\n, \ell)$, and $(T,\fr, v)$ are local rings; and
$\varphi\colon R\to S$ and $\sigma\colon S\to T$ are local ring homomorphisms.

Following Avramov and Iyengar~\cite{AI2}, we investigate connections between the vanishing of Andr\'{e}-Quillen homology and the structure of a local ring homomorphism, focusing on complete intersection flat dimensions;
see Section~\ref{background} for background.
In this direction, we use the Andr\'{e}-Quillen dimension~\eqref{eq250805a} and a result of Avramov, Foxby, and Herzog~\cite[Lemma (3.2)]{AFH1} to
establish an ascent result in Theorem~\ref{thm3.1} for complete intersection flat dimensions along an essentially of finite type flat local homomorphism with complete intersection closed fiber.
%
%
We push this further to the case where $\vf$ has finite flat dimension in Corollary~\ref{cor250803a}.

In~\cite[Theorem B]{Sh}, Sharif extended a result of Majadas Soto~\cite[Proposition~12]{S}---obtained separately by Avramov,
Henriques and \c{S}ega~\cite[(2.5)(3)]{AHS} with a different proof---on Andr\'{e}-Quillen homology from the case where $\varphi$ is surjective
to the case where $\varphi$ is essentially of finite type, and $S$ is a complete local ring. As an application of Theorem~\ref{thm3.1}(i), we prove~\cite[Theorem~B]{Sh} in Theorem~\ref{thm3.2} without the completeness assumption on $S$, and with a much shorter proof. 
%
%
Lastly, using Theorem~\ref{thm3.1}(ii), we extend~\cite[Theorem A (iv)$\implies$(v)]{Sh}
to the case where $\sigma$ is essentially of finite type; see Corollary~\ref{cor3.4}.

\section{Some Background Material}
\label{background}

Central to this work are the following Cohen factorizations of
Avramov, Foxby, and B. Herzog~\cite{AFH1}.

\begin{para}\label{para1}
Let $\widehat{R}$ and $\widehat{S}$ be the $\m$-adic and $\n$-adic completions of $R$ and $S$, respectively.
The \emph{completion} of $\varphi$ is the induced local ring homomorphism $\widehat{\varphi}\colon \widehat{R} \to \widehat{S}$. The \emph{semi-completion} of $\varphi$ is the local homomorphism
$\grave{\varphi}\colon R \to \widehat{S}$, which is obtained by composing the canonical inclusion $R\to \widehat{R}$ with $\widehat{\varphi}$, or equivalently,
by composing $\varphi$ with the canonical inclusion $S \to \widehat{S}$.
A \emph{regular  factorization of $\vf$},
is a diagram of local ring homomorphisms
$R\xra{\dot{\varphi}} R'\xra{\varphi'} S$
such that $\varphi=\varphi'\dot{\varphi}$, the map $\dot{\varphi}$ is \emph{weakly regular}
(i.e., flat with regular \emph{closed fibre} $R'/\fm R'$), and $\varphi'$ is surjective. A \emph{Cohen factorization of $\vf$} is a regular factorization
$R\xra{\dot{\varphi}} R'\xra{\varphi'} S$ of $\vf$ such that $R'$ is complete.
The existence of a Cohen factorization for $\grave{\varphi}$, is
guaranteed by~\cite[Theorem (1.1)]{AFH1}.
\end{para}

\begin{para}\label{para2}
The local ring homomorphism $\varphi\colon R\to(S,\n)$ is called \emph{complete intersection} or \emph{c.i.}
at $\n$, if in a given Cohen factorization $R\to R'\xra{\varphi'} \widehat{S}$ of
$\grave{\varphi}$, the ideal $\ker\varphi'$ is generated by an $R'$-regular sequence.
The c.i.~property for $\varphi$ is independent of the choice of Cohen factorization
by~\cite[Remark (3.3)]{Av}.
For instance, if $\vf$ is surjective, then it is c.i.\ at $\n$ if and only if $\Ker (\vf)$ is generated by an $R$-regular sequence. And if $\vf$ is flat, then by~\cite[Lemma (5.2.1)]{Av} it is c.i.\ at $\n$ if and only if the closed fibre is a complete intersection, i.e., $\widehat{S/\m S}\cong Q/\fa$ where $Q$ is regular and $\fa$ is generated by a $Q$-regular sequence.
Also $\varphi$ is called \emph{locally complete
intersection} or \emph{l.c.i}, if for each $\q\in \spec(S)$, the local homomorphism $\varphi_\q\colon R_{\p}\rightarrow S_{\q}$ is c.i.~at $\q S_{\q}$ where $\p=\vf^{-1}(\q)$.
\end{para}

\begin{para}\label{para3}
If $\widehat{R}$ is the $\m$-adic completion of $R$, then Andr\'{e}-Quillen homology
satisfies $\DD_n(k|R,k)=\DD_n(k|\widehat{R},k)$ for $n\geq 0$, by~\cite[(10.18)]{An2}.
Using the Jacobi-Zariski exact sequence arising from $R\to \widehat{R}\to k$, one concludes that $\DD_n(\widehat{R}|R,k)=0$ for $n\geq 0$.
If we take the map between the Jacobi-Zariski exact sequences arising from the diagrams $\varphi\colon R\to S \to \ell$ and $\widehat{\varphi}\colon \widehat{R}\to \widehat{S}\to \ell$, then it follows readily that $\DD_n(\widehat{S}|\widehat{R},\ell)=\DD_n(S|R,\ell)$ for $n\geq 0$.
\end{para}

\begin{para}\label{defn130610a}
Let $R\xra{\dot{\varphi}} R'\xra{\varphi'} \widehat{S}$ be a Cohen factorization of $\grave{\varphi}$, and set $J'=\Ker\varphi'$. The \emph{$n$th Koszul homology} of $\varphi$ is $\HH_n(K(\varphi)):=\HH_n(K^{R'}(J'))$
where $K^{R'}(J')$ is the Koszul complex associated to a minimal set of generators for ideal $J'$.
The module $\HH_n(K(\varphi))$ is independent of Cohen factorization of $\grave{\varphi}$; see~\cite[Fact (2.9)]{Sh}.
\end{para}

\begin{para}\label{fact130625a}
Fix a finitely generated $R$-module $M$ and an arbitrary $R$-module $N$.

Avramov, Gasharov and Peeva~\cite{AGP} introduced and studied the complete intersection dimension of $M$:
\begin{align*}
\cidim_R M
&:=\inf\left\{\fd_Q(R'\otimes_R M)-\pd_Q(R')\left| \text{
\begin{tabular}{@{}c@{}}
$R\to R'\from Q$ is a \\ quasi-deformation
\end{tabular}
}\!\!\!\right. \right\}
\end{align*}
where a \emph{quasi-deformation} $R\xra{f} R'\xla{g} Q$ is a diagram of local ring homomorphisms such that $f$ is flat, and $g$ is surjective and c.i.\ at the maximal ideal of $R'$.
In~\cite{SSW1} Sather-Wagstaff introduced and studied the \emph{complete intersection flat dimension} of $N$,
as the natural extension of $\cidim_R M$:
\begin{align*}
\cfd_R N
&:=\inf\left\{\fd_Q(R'\otimes_R M)-\pd_Q(R')\left| \text{
\begin{tabular}{@{}c@{}}
$R\to R'\from Q$ is a \\ quasi-deformation
\end{tabular}
}\!\!\!\right. \right\}.
\end{align*}
The \emph{upper complete intersection dimension} and  \emph{weakly complete intersection dimension} of $M$
were introduced by Takahashi~\cite{RT} and Majadas Soto~\cite{M1}, respectively:
\begin{align*}
\ciddim_R M
&:=\inf\left\{\fd_Q(R'\otimes_R M)-\pd_Q(R')\left| \text{
\begin{tabular}{@{}c@{}}
$R\to R'\from Q$ is an \\ upper quasi-deformation
\end{tabular}
}\!\!\!\right. \right\}\\
\cid_R M
&:=\inf\left\{\fd_Q(R'\otimes_R M)-\pd_Q(R')\left| \text{
\begin{tabular}{@{}c@{}}
$R\to R'\from Q$ is \\ complete intersection\\ quasi-deformation
\end{tabular}
}\!\!\!\right. \right\}
\end{align*}
where an \emph{upper quasi-deformation} is a quasi-deformation $R\xra f R'\from Q$
in which the flat map $f$ is weakly regular, and
a \emph{complete intersection quasi-deformation}
is a quasi-deformation $R\xra f R'\from Q$
in which the flat map $f$ is c.i.\ at the maximal ideal of $R'$, i.e., $f$ is flat with c.i.\ closed fibre by~\ref{para2}.
We complement these with the \emph{upper complete intersection flat dimension} and the \emph{weakly complete intersection flat dimension} of $N$:
\begin{align*}
\cfdd_R N
&:=\inf\left\{\fd_Q(R'\otimes_R N)-\pd_Q(R')\left| \text{
\begin{tabular}{@{}c@{}}
$R\to R'\from Q$ is an \\ upper quasi-deformation
\end{tabular}
}\!\!\!\right. \right\}\\
\cidd_R N
&:=\inf\left\{\fd_Q(R'\otimes_R N)-\pd_Q(R')\left| \text{
\begin{tabular}{@{}c@{}}
$R\to R'\from Q$ is \\ complete intersection\\ quasi-deformation
\end{tabular}
}\!\!\!\right. \right\}.
\end{align*}
From the definitions, we see that $\cfdd_R N\geq\cidd_R N\geq\cfd_R N$.
\end{para}

In the proof of Corollary~\ref{cor3.4}, we use the following construction of Iyengar and
Sather-Wagstaff~\cite[(5.9)]{IW}.

\begin{construction}\label{fact130607c}
Let $R\xra{\dot{\vf}}R'\xra{\vf'} S$ and $R'\xra{\dot{\rho}}
R''\xra{\rho'} T$
be Cohen factorizations of $\vf$ and $\sigma\vf'$,
respectively. Then $\rho'$ factors through the tensor
product $S'=R''\otimes_{R'}S$ and gives the following commutative
diagram of local ring homomorphisms
\[
\xymatrixrowsep{2.5pc}
\xymatrixcolsep{2.5pc}
\xymatrix{
&& R''
\ar@{->}[dr]^{\vf''}
\ar@/^2.5pc/[ddrr]^{\rho'=\sigma'\vf''}
\\
& R'
\ar@{->}[dr]^{\vf'}
\ar@{->}[ur]^{\dot\rho}
&& S'
\ar@{->}[dr]^{\sigma'}
\\
R
\ar@{->}[rr]^{\vf}
\ar@/^2.5pc/[uurr]^{\dot\rho \dot\vf}
\ar@{->}[ur]^{\dot\vf}
&& S
\ar@{->}[rr]^{\sigma}
\ar@{->}[ur]^{\dot\sigma}
&& T
}
\]
in which $\dot{\sigma}$ and $\vf''$ are the natural maps to
the tensor product and the diagrams
$S\to S'\to T$,
$R\to R''\to T$, and
$R'\to R''\to S'$ are Cohen factorizations.
\end{construction}

\section{Main Results}\label{results}

Our first main result is an ascent result for complete intersection flat dimensions.

\begin{thm}\label{thm3.1}
Let $M$ be a finitely generated $S$-module. If $\varphi$ is flat, essentially of finite type, and c.i.\ at $\n$, then the following hold.

\begin{itemize}
\item[(i)]
If $\cfd_R M<\infty$, then $\cidim_S M<\infty$.
\item[(ii)]
If $\cidd_R M<\infty$, then $\cid_S M<\infty$.
\item[(iii)]
Let $\ell$ be separable over $k$. If $\cfdd_R M<\infty$, then $\ciddim_S M<\infty$.
\end{itemize}
\end{thm}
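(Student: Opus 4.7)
My plan is to prove the three parts uniformly by constructing, from a quasi-deformation of $R$ witnessing the hypothesis, a suitable quasi-deformation of $S$ witnessing the conclusion. The key structural input is that $\vf$, being essentially of finite type, flat, and c.i.\ at $\n$, admits a presentation $S\cong R[\underline y]_{\fp}/(\underline h)$ where $\underline y=(y_1,\ldots,y_n)$ and $\underline h=(h_1,\ldots,h_r)$ is an $R[\underline y]_{\fp}$-regular sequence; this follows from~\ref{para2} in the flat e.f.t.\ case.

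First I fix a quasi-deformation $R\xra{f} R'\xla{g} Q$ witnessing the hypothesis, so $\fd_Q(R'\otimes_R M)<\infty$, with the additional properties required: for~(ii), that $f$ be c.i.\ at the maximal ideal $\fm'$ of $R'$; for~(iii), that $f$ be weakly regular. Next I base change: let $T=R'\otimes_R S$ and choose a prime $\fq\subseteq T$ lying over both $\fm'$ and $\n$, which exists because $T/(\fm',\n)T\cong k'\otimes_k\ell$ is nonzero. Setting $S'=T_{\fq}$ makes $S\to S'$ flat and local, and the presentation of $\vf$ yields $S'\cong R'[\underline y]_{\fs^*}/(\underline h)$ where $\fs^*\subseteq R'[\underline y]$ corresponds to $\fq$. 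I then take $Q^*=Q[\underline y]_{\fs^*_Q}$ with $\fs^*_Q\subseteq Q[\underline y]$ the preimage of $\fs^*$ under the surjection $Q[\underline y]\onto R'[\underline y]$ induced by $g$. The map $Q\to Q^*$ is weakly regular, and $Q^*\onto S'$ is surjective with kernel generated by a lift of $\ker(g)$ concatenated with $\underline h$. This concatenation is a regular sequence in $Q^*$: flatness of $Q\to Q^*$ lifts regularity of $\ker(g)$, and regularity of $\underline h$ in $R[\underline y]_{\fp}$ is preserved under the flat base change to $R'[\underline y]_{\fs^*}\cong Q^*/\ker(g)Q^*$. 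Thus $S\to S'\leftarrow Q^*$ is a quasi-deformation of $S$.

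Next I verify the type of this quasi-deformation. The closed fibre of $S\to S'$ is a localization of $(R'/\fm R')\otimes_k\ell$. For~(ii), the c.i.\ property of the closed fibre of $f$ transfers since c.i.\ is preserved under arbitrary field extensions. For~(iii), the regular property transfers because the separability of $\ell/k$ ensures that the base change of a regular local $k$-algebra along $k\to\ell$ remains regular.

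The main obstacle is to show $\fd_{Q^*}(S'\otimes_S M)<\infty$. Flatness of $Q\to Q^*$ gives $\fd_{Q^*}(Q^*\otimes_Q(R'\otimes_R M))=\fd_Q(R'\otimes_R M)<\infty$. I then identify $S'\otimes_S M\cong(R'\otimes_R M)_{\fq}$ as the quotient of $Q^*\otimes_Q(R'\otimes_R M)\cong R'[\underline y]_{\fs^*}\otimes_{R'}(R'\otimes_R M)$ by the diagonal sequence $\{y_i-\tilde y_i\}_{i=1}^{n}$, where $\tilde y_i$ is the endomorphism of $R'\otimes_R M$ given by the action of the image of $y_i$ in $S$. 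A top-$\underline y$-degree argument shows the diagonal sequence is regular on $R'[\underline y]_{\fs^*}\otimes_{R'}(R'\otimes_R M)$, yielding
\[\fd_{Q^*}(S'\otimes_S M)\leq\fd_{Q^*}(Q^*\otimes_Q(R'\otimes_R M))+n<\infty,\]
which completes the proof in all three cases. The subtlety is bookkeeping the two commuting $\underline y$-actions (from $R'[\underline y]$ and from the $S$-structure on $M$) and verifying the diagonal regularity; the authors' argument via~\cite[Lemma~(3.2)]{AFH1} together with the Andr\'e-Quillen dimension (recalled in~\ref{para3}) likely provides a more conceptual route to the same end.
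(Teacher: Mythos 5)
Your construction is correct, but it follows a genuinely different route from the paper. The paper keeps the given quasi-deformation $R\to R'\from Q$ intact, passes to the completed localization $\widehat{T_{\mathfrak M}}$ of $T=R'\otimes_RS$, and uses Andr\'e--Quillen homology (vanishing of $\DD_2$, flat base change, the Jacobi--Zariski sequence, and \cite[Proposition (1.8)]{Av}) to show that $Q\to\widehat{T_{\mathfrak M}}$ is c.i.\ at the maximal ideal; it then extracts the new quasi-deformation from a Cohen factorization $Q\to Q'\to\widehat{T_{\mathfrak M}}$ and invokes \cite[Lemma (3.2)]{AFH1} to convert finite flat dimension over $Q$ into finite projective dimension over $Q'$. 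You instead build the quasi-deformation of $S$ explicitly: from a c.i.\ presentation $S\cong R[\underline y]_{\fp}/(\underline h)$ you form $S'=T_{\fq}\cong R'[\underline y]_{\fs^*}/(\underline h)$ and $Q^*=Q[\underline y]_{\fs^*_Q}$, check that $\ker(Q^*\onto S')$ is generated by the regular sequence obtained by concatenating $\ker(g)$ with $\underline h$, and bound $\fd_{Q^*}(S'\otimes_SM)$ by reduction to the diagonal; this avoids completion, Andr\'e--Quillen homology, and \cite[Lemma (3.2)]{AFH1} altogether, which is a real simplification for part (i). Three points you assert do deserve justification, though all are true: (a) that c.i.\ at $\n$ for the essentially of finite type $\vf$ forces $\underline h$ to be an honest $R[\underline y]_{\fp}$-regular sequence requires a small descent along the faithfully flat completion $R[\underline y]_{\fp}\to\widehat{R[\underline y]_{\fp}}$ (minimal generators plus Koszul acyclicity), since \ref{para2} only speaks of Cohen factorizations; (b) for (ii), that localizations of $(R'/\m R')\otimes_k\ell$ stay complete intersections under an arbitrary (finitely generated) residue field extension; and (c) for (iii), that they stay regular when $\ell/k$ is separable. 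Facts (b) and (c) are exactly where the paper's machinery is spent --- (b) is handled there by the $\DD_2$-vanishing and base-change argument, and (c) by \cite[Theorem (26.9)]{Mat} and \cite[Theorem 1]{M} --- so you should either cite these or reproduce the short arguments; with those references supplied, your proof is complete and arguably more elementary, while the paper's Andr\'e--Quillen route treats the three fiber conditions more uniformly and without needing an explicit presentation of $\vf$.
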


\begin{proof}
(i) Since $\cfd_R M<\infty$, there is a quasi-deformation
\begin{equation}
R\xra{\delta} (R',\m',k')\xla{\gamma} Q\label{eq250803d}
\end{equation}
such that $\fd_Q(R'\otimes_{R} M)<\infty$.
Set $T=R'\otimes_{R} S$. Since $\varphi$ is essentially of finite type, $T$ is Noetherian, and there is a maximal
ideal $\mathfrak M$ of $T$ that contracts to $\m'$ in $R'$ and to $\n$ in $S$. Consider the completion $(\widehat{T_{\mathfrak M}},\mathfrak M\widehat{T_{\mathfrak M}},E)$ of the local ring $T_{\mathfrak M}$.
The natural maps $\alpha\colon  S\rightarrow \widehat{T_{\mathfrak M}}$ and $\beta\colon  R'\rightarrow \widehat{T_{\mathfrak M}}$ are flat and local.

We claim that $\DD_2(\widehat{T_{\mathfrak M}}|R',E)=0$.
By our assumptions, $\varphi$ is a flat local homomorphism with c.i.\ closed fiber. Thus from~\ref{para2} and ~\cite[Proposition (1.8)]{Av} we get $\DD_2(S|R,\ell)=0$.
By~\cite[(3.20)]{An2}, we have $\DD_2(S|R,E)\cong\DD_2(S|R,\ell)\otimes_{\ell} E=0$.
Using flat base change and localization~\cite[(4.54)]{An2} and~\cite[(4.59), (5,27)]{An2}, our claim now follows from~\ref{para3}. 

Thus~\cite[Proposition (1.8)]{Av} implies that $\beta$ is c.i.\ at $\mathfrak M\widehat{T_{\mathfrak M}}$. Since $\gamma$ is a deformation, we have $\DD_2(R'|Q,k')=0$, and again using~\cite[(3.20)]{An2}, we get $\DD_2(R'|Q,E)=0$. The Jacobi-Zariski exact sequence arising from the diagram $Q\xra{\gamma} R'\xra{\beta} \widehat{T_{\mathfrak M}}$ and the above vanishing results imply $\DD_2(\widehat{T_{\mathfrak M}}|Q,E)=0$. Thus, the local homomorphism $\theta=\beta\gamma$ is c.i.\ at
$\mathfrak M\widehat{T_{\mathfrak M}}$ by~\cite[Proposition (1.8)]{Av}, and hence it admits a Cohen factorization
$Q\xra{\dot{\theta}} Q'\xra{\theta'} \widehat{T_{\mathfrak M}}$ such that $\theta'$ is a deformation by~\ref{para2}. This yields that the following diagram of local homomorphisms is a quasi-deformation.
\begin{equation}\label{eq130610c}
S\xra{\alpha} \widehat{T_{\mathfrak M}}\xla{\theta'} Q'
\end{equation}

The isomorphism $T\otimes_{S} M\cong R'\otimes_{R} M$ of $Q$-modules implies that $\fd_{Q}(T\otimes_{S} M)<\infty$.
Since $\widehat{T_{\mathfrak M}}\otimes_{T}(T\otimes_{S} M)\cong \widehat{T_{\mathfrak M}}\otimes_{S} M$ as $Q$-modules and $\widehat{T_{\mathfrak M}}$ is a flat $T$-module, we have $\fd_Q(\widehat{T_{\mathfrak M}}\otimes_{S} M)<\infty$. Also, $\widehat{T_{\mathfrak M}}\otimes_{S} M$ is finitely generated over $\widehat{T_{\mathfrak M}}$. Now using~\cite[Lemma(3.2)]{AFH1} we get $\pdim_{Q'}(\widehat{T_{\mathfrak M}}\otimes_{S} M)<\infty$, and hence $\cidim_S M<\infty$.

(ii) We continue with the notations of the proof of part (i) and assume that $\cidd_R M<\infty$ and hence diagram~\eqref{eq250803d} is weakly c.i., so $\delta$ is c.i.\ at $\m'$, and hence $\DD_2(R'|R,k')=0$, by~\cite[Proposition (1.8)]{Av}. By an argument similar to the first part, we get $\DD_2(\widehat{T_{\mathfrak M}}|S,E)=0$. Therefore~\cite[Proposition (1.8)]{Av} implies that $\alpha$ is c.i. at $\mathfrak M\widehat{T_{\mathfrak M}}$ and hence diagram~\eqref{eq130610c} is weakly c.i. As in the end of the proof of the previous part, we get $\cid_S M<\infty$.

(iii) Again continue with the notations of the proof of part (i) and assume that $\cfdd_R M<\infty$ and hence diagram~\eqref{eq250803d}
is an upper quasi-deformation, so $R'/\m R'$ is regular. We have the isomorphism $T/(\n\otimes_{R} R')\cong \ell\otimes_{k}R'/\m R'$ of Noetherian rings.
Since $\ell$ is separable over $k$, we deduce from~\cite[Theorem (26.9)]{Mat} that $\ell$ is 0-smooth over $k$. Now by~\cite[Theorem 1]{M}, the closed fiber of $\alpha$ is regular, so diagram~\eqref{eq130610c} is an upper quasi-deformation. As in the proof of part~(i), we get $\ciddim_S M<\infty$.
\end{proof}

Next, we obtain the conclusions of Theorem~\ref{thm3.1} when $\vf$ is  surjective, then for the case where $\vf$ has finite flat dimension.

\begin{prop}
    \label{prop250803a}
Let $M$ be a finitely generated $S$-module. If $\varphi$ is surjective and c.i.\ at $\n$, then the following hold.
\begin{itemize}
\item[(i)]
If $\cfd_R M<\infty$, then $\cidim_S M<\infty$.
\item[(ii)]
If $\cidd_R M<\infty$, then $\cid_S M<\infty$.
\item[(iii)]
If $\cfdd_R M<\infty$, then $\ciddim_S M<\infty$.
\end{itemize}
\end{prop}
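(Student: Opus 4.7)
The plan is to handle all three parts simultaneously by a single base-change construction: given a quasi-deformation of $R$ of the appropriate type witnessing finiteness of the relevant CI flat dimension of $M$ over $R$, I would pull it along $\vf$ to obtain a quasi-deformation of $S$ of the same enhanced type. Unlike the proof of Theorem~\ref{thm3.1}, this avoids all completion-and-Cohen-factorization machinery because surjectivity of $\vf$ lets me carry the c.i.\ ideal of $\vf$ directly into the top of the quasi-deformation.

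First I would use that $\vf$ is surjective and c.i.\ at $\n$ to write $\ker\vf = (\mathbf{x})$ for some $R$-regular sequence $\mathbf{x} = x_1,\ldots,x_c$. Starting from a quasi-deformation $R \xra{\delta} (R',\m',k') \xla{\gamma} Q$ with $\fd_Q(R' \otimes_R M) < \infty$, set $S' := R'/\delta(\mathbf{x})R' \cong R' \otimes_R S$. Flatness of $\delta$ makes $\delta(\mathbf{x})$ an $R'$-regular sequence contained in $\m'$, so $S'$ is local and the induced map $\delta'\colon S \to S'$ is local and flat, with closed fibre canonically isomorphic to $R'/\m R'$, the closed fibre of $\delta$. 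Next, lift $\delta(\mathbf{x})$ through $\gamma$ to elements $\mathbf{w} = w_1,\ldots,w_c$ of $Q$. If $\mathbf{y}$ is a $Q$-regular sequence generating $\ker\gamma$, then since $Q/(\mathbf{y}) = R'$ and the image of $\mathbf{w}$ in $R'$ is the $R'$-regular sequence $\delta(\mathbf{x})$, the concatenation $\mathbf{y},\mathbf{w}$ is $Q$-regular and generates the kernel of the composite $\gamma'\colon Q \to R' \to S'$. Thus $\gamma'$ is surjective and c.i., so $S \xra{\delta'} S' \xla{\gamma'} Q$ is a quasi-deformation. Since the closed fibre of $\delta'$ matches that of $\delta$, this new quasi-deformation is automatically of the same enhanced type (arbitrary, c.i.\ closed fibre, or regular closed fibre), which gives uniformly the correct witness for (i), (ii), or (iii).

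To close, I would use that $M$ is an $S$-module to identify
\[
S' \otimes_S M \;\cong\; R' \otimes_R S \otimes_S M \;\cong\; R' \otimes_R M,
\]
so $\fd_Q(S' \otimes_S M) = \fd_Q(R' \otimes_R M) < \infty$. Because $S'$ is a quotient of $Q$ by a $Q$-regular sequence, $\pd_Q(S') < \infty$ as well, and the definitions in~\ref{fact130625a} immediately yield $\cidim_S M < \infty$, $\cid_S M < \infty$, or $\ciddim_S M < \infty$ in the respective cases. I do not foresee any real obstacle; the only point requiring a touch of care is the verification that $\mathbf{y},\mathbf{w}$ is $Q$-regular (which uses precisely the interplay between the $Q$-regularity of $\mathbf{y}$ and the $R' = Q/(\mathbf{y})$-regularity of $\delta(\mathbf{x})$), and this is a routine consequence of the definition of a regular sequence.
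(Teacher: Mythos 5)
Your proposal is correct and follows essentially the same route as the paper: base-change the given quasi-deformation along the surjection $\vf$ to get $S\to S'=R'\otimes_RS\from Q$, observe that the closed fibre of $S\to S'$ agrees with that of $\delta$ (so the enhanced type is preserved), and use $S'\otimes_SM\cong R'\otimes_RM$ to transfer finiteness of $\fd_Q$. Your explicit verification that the concatenated sequence $\mathbf{y},\mathbf{w}$ is $Q$-regular and generates $\ker(Q\to S')$ is just a spelled-out version of the paper's appeal to flat base change and composition of deformations, so there is no essential difference.
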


\begin{proof}
Fix a quasi-deformation
\begin{equation}
R\xra{\delta} (R',\m',k')\xla{\gamma} Q \label{eq250803a}
\end{equation}
and consider the following diagram of local ring homomorphisms
$$\xymatrix{
R\ar[r]^-\delta\ar[d]_\vf
&R'\ar[d]_{\vf'}
&Q\ar[l]_-\gamma\ar[ld]^-{\gamma'=\vf'\gamma} \\
S\ar[r]_-{\delta'}&S'=\Otimes[R]{S}{R'}
}$$
where $\delta'$ and $\vf'$ are the natural maps.
Then $\delta'$ is flat by base change with the same closed fibre as $\delta$ since $\vf$ is surjective. Our assumptions on $\vf$ imply that it is surjective with kernal generated by an $R$-regular sequence, so $\vf'$ is surjective with kernal generated by an $R'$-regular sequence by flat base change. Thus, the diagram \begin{equation}
S\xra{\delta'}(S',\n')\xla{\gamma'}Q\label{eq250803b}
\end{equation} is a quasi-deformation. Furthermore, since $\delta$ and $\delta'$ have the same closed fibre, if~\eqref{eq250803a} is an
upper quasi-deformation or a complete intersection quasi-deformation, then so is~\eqref{eq250803b}.

(i) Assume that $\cfd_R M<\infty$, so there is a quasi-deformation~\eqref{eq250803a} such that $\fd_Q(\Otimes[R]{R'}{M})<\infty$. It follows that the quasi-deformation~\eqref{eq250803b} satisfies
\begin{align*}
\Otimes[S]{S'}{M}
\cong \Otimes[S]{(\Otimes[R]{S}{R'})}{M}
\cong\Otimes[R]{R'}{M}.
\end{align*}
The fact that $M$ is finitely generated over $S$ implies that $\Otimes[S]{S'}{M}$ is finitely generated over $S'$ and $Q$, so
\begin{align*}
\pd_Q(\Otimes[S]{S'}{M})
=\fd_Q(\Otimes[S]{S'}{M})
=\fd_Q(\Otimes[R]{R'}{M})<\infty
\end{align*}
which implies $\cidim_S M<\infty$, as desired.

(ii)
and
(iii)
follow similarly since
if~\eqref{eq250803a} is an
upper quasi-deformation or a complete intersection quasi-deformation, then so is~\eqref{eq250803b}.
\end{proof}

\begin{cor}
    \label{cor250803a}
Let $M$ be a finitely generated $S$-module. If $\varphi$ is essentially of finite type and c.i.\ at $\n$, then the following hold.
\begin{itemize}
\item[(i)]
If $\cfd_R M<\infty$, then $\cidim_S M<\infty$.
\item[(ii)]
If $\cidd_R M<\infty$, then $\cid_S M<\infty$.
\item[(iii)]
Let $\ell$ be separable over $k$. If $\cfdd_R M<\infty$, then $\ciddim_S M<\infty$.
\end{itemize}
\end{cor}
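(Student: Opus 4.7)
The strategy is to factor $\vf$ as a flat essentially-of-finite-type c.i.\ map followed by a surjective c.i.\ map, and then apply Theorem~\ref{thm3.1} and Proposition~\ref{prop250803a} in sequence. Since $\vf$ is essentially of finite type, I would write $S\cong(R[x_1,\dots,x_n]/I)_{\fN/I}$ for some prime $\fN$ of $R[x_1,\dots,x_n]$ with $I\subseteq\fN$ and $\fN\cap R=\fm$, and set $R'=R[x_1,\dots,x_n]_{\fN}$. This produces a factorization $R\xra{\dot\vf}R'\xra{\vf'}S$ in which $\dot\vf$ is essentially of finite type and flat with regular closed fibre (a localization of a polynomial ring over $k$), hence is weakly regular and in particular c.i.\ at $\fm R'$ by~\ref{para2}, while $\vf'$ is a surjective local homomorphism. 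Passing to completions gives a Cohen factorization $\widehat{R}\to\widehat{R'}\to\widehat{S}$ of $\grave\vf$, so by~\ref{para2} together with faithful flatness of completion, the c.i.\ hypothesis on $\vf$ at $\n$ forces $\ker\vf'=IR'$ to be generated by an $R'$-regular sequence; hence $\vf'$ is c.i.\ at $\n$. Moreover, because $\vf'$ is local and surjective, the residue field of $R'$ is identified with $\ell$.

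A key ancillary observation is that on finitely generated modules the three pairs $(\cidim,\cfd)$, $(\cid,\cidd)$, and $(\ciddim,\cfdd)$ coincide: each pair uses the same class of quasi-deformations and the same expression $\fd_Q(R'\otimes_R N)-\pd_Q(R')$, and for finitely generated $N$ the module $R'\otimes_R N$ is finitely generated over $Q$, so $\fd_Q$ and $\pd_Q$ agree there. Since $M$ is finitely generated over $S$ and $\vf'$ is surjective, $M$ is also finitely generated over $R'$, so these identifications can be used at the $R'$-level.

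Each part is then obtained by a clean two-step ascent. For~(i), Theorem~\ref{thm3.1}(i) applied to $\dot\vf$ yields $\cidim_{R'}M<\infty$, which equals $\cfd_{R'}M<\infty$; Proposition~\ref{prop250803a}(i) applied to $\vf'$ then gives $\cidim_S M<\infty$. Parts~(ii) and~(iii) proceed identically using the corresponding parts of Theorem~\ref{thm3.1} and Proposition~\ref{prop250803a} with the pair $(\cidd,\cid)$ in~(ii) and $(\cfdd,\ciddim)$ in~(iii); the separability hypothesis in~(iii) is exactly what is needed to invoke Theorem~\ref{thm3.1}(iii) on $\dot\vf$, since the residue field of $R'$ has been identified with $\ell$. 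The main obstacle I foresee is the first step: producing the regular factorization and verifying that $\vf'$ is surjective c.i.\ at $\n$; once this is in hand, the rest is mechanical, combining the invariant identification on finitely generated modules with the two previously established ascent results.
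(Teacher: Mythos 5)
Your proposal is correct and follows essentially the same route as the paper: factor $\vf$ through the localization $R'=R[x_1,\dots,x_n]_{\fN}$ as a weakly regular map followed by a surjection that is c.i.\ at $\n$, apply Theorem~\ref{thm3.1} to the first map and Proposition~\ref{prop250803a} to the second, noting that $M$ is finitely generated over $R'$ and that the residue field of $R'$ is $\ell$ for part (iii). Your extra observations (descent of the regular-sequence condition from $\widehat{R'}$, and the agreement of the paired invariants on finitely generated modules, for which the inequality $\cfd\leq\cidim$, etc., already suffices) simply spell out details the paper leaves implicit.
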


\begin{proof}
(i) Assume that $\cfd_R M<\infty$.
Since $\varphi$ is essentially of finite type, it admits a regular factorization $R\xra{\dot\vf} R'=R[X]_{\mathfrak M}\xra{\vf'} S$.
The fact that $\dot\vf$ is weakly regular implies
that it is c.i.\ at $\n$, so Theorem~\ref{thm3.1}(i) implies that $\cidim_{R'} M<\infty$.
The fact that
$\vf$ is c.i.\ at $\n$ implies that $\vf'$ is also c.i.\ at $\n$, so Proposition~\ref{prop250803a}(i) implies that
$\cidim_S M<\infty$, as desired.

(ii) and (iii) follow similarly using the appropriate items in Theorem~\ref{thm3.1} and Proposition~\ref{prop250803a}. For (iii), note that $R'$ and $S$ have the same residue field $\ell$, so  residue-field separability for $R\to S$ implies the same for $R\to R'$.
\end{proof}

Our next theorem extends a result of Majadas Soto~\cite[Proposition 12]{S}.
It is stated in terms of
Avramov and Iyengar's~\cite{AI2}
\emph{Andr\'{e}-Quillen dimension} of the $R$-algebra $S$
\begin{equation}
\aq_RS:=\sup\{n\in \bbn\mid \DD_n(S| R,-)\neq 0\}\label{eq250805a}
\end{equation}
in particular, we have $\aq_{R} S=-\infty$ if and only if
$\DD_n(S|R,-)=0$, for all integers $n$.

\begin{thm}\label{thm3.2}
Assume that $\varphi$ is essentially of finite type and $\cfd_{R} S<\infty$. Then $\aq_{R} S\leq 2$ if and only if $\HH_1(K(\vf))$ is a free $\widehat{S}$-module.
\end{thm}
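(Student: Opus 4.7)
The plan is to reduce to the known surjective case due to Majadas Soto~\cite{S} and Avramov--Henriques--\c{S}ega~\cite{AHS} by factoring $\vf$ through an essentially of finite type weakly regular map, with Theorem~\ref{thm3.1}(i) providing the finiteness hypothesis needed to invoke the surjective result.

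Since $\vf$ is essentially of finite type, it admits a regular factorization $R\xra{\dot\vf} R_0 \xra{\vf'} S$ with $R_0 = R[X_1,\ldots,X_n]_{\mathfrak M}$ and $\vf'$ surjective; set $J=\ker\vf'$. The map $\dot\vf$ is flat and essentially of finite type, with regular closed fiber $k[X_1,\ldots,X_n]_{\ol{\mathfrak M}}$, so it is weakly regular and in particular c.i.\ at the maximal ideal of $R_0$. Applying Theorem~\ref{thm3.1}(i) to $\dot\vf$ with $M=S$ upgrades the hypothesis $\cfd_R S<\infty$ to $\cidim_{R_0} S<\infty$. Because $\dot\vf$ is c.i., \cite[Proposition~(1.8)]{Av} together with the rigidity of André-Quillen homology yields $\DD_n(R_0|R,-)=0$ for all $n\geq 2$; the Jacobi-Zariski exact sequence for $R\to R_0\to S$ then furnishes $\DD_n(S|R,-)\cong\DD_n(S|R_0,-)$ for $n\geq 3$, whence $\aq_R S\leq 2$ if and only if $\aq_{R_0} S \leq 2$.

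Next I would apply the surjective result~\cite[Proposition~12]{S}, \cite[(2.5)(3)]{AHS} to the surjection $\vf':R_0\to S$ of finite c.i.\ dimension: it gives $\aq_{R_0}S\leq 2$ if and only if $\HH_1(K^{R_0}(J))$ is free as an $S$-module. To convert $S$-freeness into $\comp S$-freeness, I would use the Cohen factorization $R\to \comp{R_0}\to \comp S$ of $\grave\vf$ obtained by completing $R_0$ at its maximal ideal. Since Koszul complexes commute with flat base change and a minimal generating set of $J$ remains a minimal generating set of $J\comp{R_0}$, one has $\HH_1(K(\vf))=\HH_1(K^{\comp{R_0}}(J\comp{R_0}))\cong \comp S\otimes_S \HH_1(K^{R_0}(J))$; faithful flatness of $S\to\comp S$ then yields that $\HH_1(K(\vf))$ is free over $\comp S$ precisely when $\HH_1(K^{R_0}(J))$ is free over $S$. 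Chaining the equivalences completes the argument.

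The only real obstacle---and the source of the improvement over~\cite[Theorem~B]{Sh}---is the ascent $\cfd_R S<\infty \Rightarrow \cidim_{R_0} S<\infty$ along $\dot\vf$, which is exactly Theorem~\ref{thm3.1}(i). Without it, the finite c.i.\ dimension input for the surjective case cannot be extracted from the hypothesis on $S$ over $R$, and one is forced to assume $S$ complete as in Sharif's argument.
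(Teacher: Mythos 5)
Your proposal is correct in outline and shares the paper's key step: both use the regular factorization $R\to R_0=R[X]_{\mathfrak M}\to S$ and apply Theorem~\ref{thm3.1}(i) to the weakly regular map $R\to R_0$ to convert $\cfd_R S<\infty$ into $\cidim_{R_0}S<\infty$, and then reduce to the surjective result of Majadas Soto and Avramov--Henriques--\c{S}ega. Where you genuinely diverge is in the bookkeeping: the paper completes, works with the Cohen factorization $R\to\widehat{R_0}\to\widehat S$ (transferring finite CI-dimension by \cite[Proposition~(1.13)(2)]{AGP} and handling generating sets via \cite[Lemma~(2.11)]{Sh}), then descends the vanishing $\DD_n(\widehat S|\widehat{R_0},\ell)=0$ for $n\geq3$ to $\DD_n(S|R,\ell)=0$ by \cite[Lemma~(1.7)]{Av} and upgrades to $\aq_RS\leq2$ by \cite[Lemma~(8.7)]{I}; you instead stay uncompleted, compare $\aq_RS$ with $\aq_{R_0}S$ through the Jacobi--Zariski sequence, apply the surjective result to $R_0\to S$ itself, and move freeness of $\HH_1$ between $S$ and $\widehat S$ by flat base change of the Koszul complex plus faithfully flat descent; that last comparison is sound, since the images of a minimal generating set of $J$ minimally generate $\Ker(\widehat{R_0}\to\widehat S)$ and $\HH_1(K^{R_0}(J))$ is a finitely generated $S$-module.

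Two justifications need patching. First, $\DD_n(R_0|R,-)=0$ for $n\geq2$ does not follow from \cite[Proposition~(1.8)]{Av} ``together with rigidity'': that proposition only characterizes the c.i.\ property via vanishing of $\DD_2$ with residue-field coefficients at the closed point, whereas you need vanishing of the functors on all modules. The correct (and easier) reason is that $R_0$ is a localization of a polynomial ring, hence essentially smooth over $R$, so $\DD_n(R_0|R,-)=0$ for all $n\geq1$ by base change and localization \cite{An2}. Second, you quote \cite[Proposition~12]{S} and \cite[(2.5)(3)]{AHS} as a biconditional ``$\aq_{R_0}S\leq2$ iff $\HH_1(K^{R_0}(J))$ is free''; as these results are used in the paper, they give only one implication, and only the vanishing of $\DD_n(S|R_0,\ell)$ for $n\geq3$ with residue-field coefficients. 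So for the converse of the theorem you still need a step like \cite[Lemma~(8.7)]{I} (this is precisely where the paper re-uses the essentially-of-finite-type hypothesis) to pass from residue-field vanishing to the functorial vanishing that defines $\aq$, and for the forward implication you need the quasi-complete-intersection characterization from \cite{AHS} or simply \cite[Lemma~(2.12)]{Sh}, which requires no CI-dimension hypothesis and is how the paper disposes of that direction. With these repairs your argument goes through and is a legitimate, slightly more completion-free variant of the paper's proof.
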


\begin{proof}
The forward implication holds by~\cite[Lemma (2.12)]{Sh}.
For the converse, assume that $\HH_1(K(\vf))$ is a free $\widehat{S}$-module.
Since $\varphi$ is essentially of finite type, it admits a regular factorization $R\rightarrow R'=R[X]_{\mathfrak M}\rightarrow S$. Hence,
$R\xra{\dot{\vf}} \widehat{R'}\xra{\vf'} \widehat{S}$ is a Cohen factorization for $\grave{\vf}$.
Since $\cfd_{R} S<\infty$, from Theorem~\ref{thm3.1}(i), we get $\cdim_{R'} S<\infty$, and hence $\cdim_{\widehat{R'}} \widehat{S}<\infty$, by~\cite[Proposition (1.13)(2)]{AGP}.

Let $E$ be the Koszul complex associated to an arbitrary generating set for $\Ker{\vf'}$.
By~\cite[Lemma (2.11)]{Sh} the freeness of $\HH_1(K(\vf))$ as an $\widehat{S}$-module implies
that $H_1(E)$ is also free over $\widehat{S}$, and hence $D_n(\widehat{S}|\widehat{R'},\ell)=0$ for $n\geq 3$ by ~\cite[Proposition 12]{S}.
Now from~\cite[Lemma (1.7)]{Av} it follows that $D_n(S|R,\ell)=0$ for $n\geq 3$, and since $\vf$ is essentially of finite type, from~\cite[Lemma (8.7)]{I} we get $\aq_{R} S\leq 2$.
\end{proof}

Next we recover a result of Majadas Soto~\cite[Proposition 12]{S} and Avramov,
Henriques and \c{S}ega~\cite[(2.5)(3)]{AHS}.

\begin{cor}\label{cor3.3}
Assume that $\varphi$ is surjective, and let $E$ be the Koszul complex associated to a set of generators for $J:=\Ker \varphi$.
If $\cdim_{R} S<\infty$ and $H_1(E)$ is a free $S$-module, then $\aq_{R} S\leq 2$.
\end{cor}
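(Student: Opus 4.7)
The plan is to deduce the statement directly from Theorem~\ref{thm3.2} applied to $\vf$ itself. Since $\vf$ is surjective it is in particular essentially of finite type, and $S$ is finitely generated as an $R$-module. On finitely generated modules the infima defining $\cidim_R S$ and $\cfd_R S$ coincide (they range over the same class of quasi-deformations and are evaluated by the same expression), so the hypothesis $\cdim_R S=\cidim_R S<\infty$ yields $\cfd_R S<\infty$. Thus the two hypotheses of Theorem~\ref{thm3.2} are in place, and the corollary will follow once I show that $\HH_1(K(\vf))$ is free over $\widehat S$.

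Because $\vf$ is surjective, so is its completion $\widehat\vf\colon\widehat R\to\widehat S$, and $\Ker\widehat\vf=J\widehat R$. Hence $R\to\widehat R\xra{\widehat\vf}\widehat S$ is a Cohen factorization of $\grave\vf$, relative to which $K(\vf)$ is the Koszul complex over $\widehat R$ on a \emph{minimal} generating set of $J\widehat R$. On the other hand, $E\otimes_R\widehat R$ is the Koszul complex over $\widehat R$ on the (possibly non-minimal) generators of $J\widehat R$ induced by the chosen generators of $J$. Flatness of $R\to\widehat R$, together with the standard fact that $J$ annihilates $\HH_1(E)$, gives
\[
\HH_1(E\otimes_R\widehat R)\cong \HH_1(E)\otimes_R\widehat R\cong \HH_1(E)\otimes_S\widehat S,
\]
and the right-hand side is free over $\widehat S$ by the assumption that $\HH_1(E)$ is free over $S$.

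It then remains to transfer this freeness back from the non-minimal Koszul complex to $K(\vf)$. The standard comparison of Koszul complexes on different generating sets of the same ideal—the input that is packaged in~\cite[Lemma~(2.11)]{Sh} and used in the proof of Theorem~\ref{thm3.2}—produces a direct sum decomposition $\HH_1(E\otimes_R\widehat R)\cong \HH_1(K(\vf))\oplus F$ with $F$ a finitely generated free $\widehat S$-module. Since the left side is free and $\widehat S$ is local, the direct summand $\HH_1(K(\vf))$ is finitely generated projective, hence free. Theorem~\ref{thm3.2} then delivers $\aq_R S\le 2$.

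The one step that is not merely bookkeeping is the Koszul splitting $\HH_1(E\otimes_R\widehat R)\cong \HH_1(K(\vf))\oplus F$; once that is granted, flat base change and the reduction to Theorem~\ref{thm3.2} finish the argument with essentially no further work.
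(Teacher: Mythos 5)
Your argument is correct and follows essentially the same route as the paper: base change the Koszul complex along $R\to\widehat R$, use the redundant-generator splitting of Koszul homology (exactly the content of \cite[Proposition (1.6.7)(b)]{BH} and \cite[Lemma (2.11)]{Sh} that the paper invokes) to transfer freeness from $\HH_1(E)\otimes_S\widehat S$ to $\HH_1(K(\vf))$, and then apply Theorem~\ref{thm3.2}. The only difference is that you spell out the step the paper leaves implicit, namely that $\cidim_R S<\infty$ gives $\cfd_R S<\infty$ for the finitely generated $R$-module $S$, which is a welcome clarification rather than a deviation.
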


\begin{proof}
Since a generating set over $R$ for $J$ induces a generating set over $\widehat{R}$ for  $\widehat{J}:=\Ker\widehat{\varphi}$, thus~\cite[Proposition (1.6.7)(b)]{BH} and~\cite[Lemma (2.11)]{Sh}
imply that $\HH_1(K(\varphi)$ is a free $\widehat{S}$-module, and hence $\aq_{R} S\leq 2$ by Theorem~\ref{thm3.2}.
\end{proof}

As an application of Theorems~\ref{thm3.1}(ii) and~\ref{thm3.2}, we have our next corollary.

\begin{cor}\label{cor3.4}
Assume that $\sigma$ is essentially of finite type and $\sigma\varphi$ is c.i.\ at $\fr$. Consider the following conditions:

\begin{itemize}
\item[(i)]
$\cfdd_S T<\infty$.
\item[(ii)]
$\cidd_S T<\infty$.
\item[(iii)]
$\cfd_{S} T<\infty$, and $\HH_1(K(\sigma))$ is a free $\widehat{T}$-module.
\item[(iv)]
$\aq_S T<\infty$.
\item[(v)]
$\aq_{S}T\leq 2$.
\end{itemize}
Then, all of the implications $(i)\implies(ii)\implies(iii)\implies(iv)\implies(v)$ hold. If $S$ and $T$ are complete, then the conditions (i)--(v) are equivalent.
\end{cor}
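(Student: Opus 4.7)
The implication (i) $\implies$ (ii) and the $\cfd_S T<\infty$ conclusion in (iii) both follow at once from the chain of inequalities $\cfdd_S T\geq\cidd_S T\geq\cfd_S T$ recorded in~\ref{fact130625a}.

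For the freeness of $\HH_1(K(\sigma))$ needed in (ii) $\implies$ (iii), my plan is to invoke Construction~\ref{fact130607c}. I choose a Cohen factorization $R\to R'\to\widehat S$ of $\grave\vf$ and extend it to a Cohen factorization $R'\to R''\to\widehat T$ of $\widehat\sigma\vf'$; the construction then produces a Cohen factorization $S\to S'\to\widehat T$ of $\grave\sigma$ with $S'=R''\otimes_{R'}\widehat S$. Because $\sigma\vf$ is c.i.\ at $\fr$ and the outer composite $R\to R''\to\widehat T$ is a Cohen factorization of $\grave{\sigma\vf}$, the kernel of $R''\to\widehat T$ is generated by an $R''$-regular sequence $\underline u$, whose image in $S'$ generates $\ker(S'\to\widehat T)$. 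Consequently, $\HH_1(K(\sigma))$ is a $\widehat T$-summand of $\Tor_1^{R''}(\widehat T,S')$. To produce the freeness, I plan to combine the regular-sequence structure of $\underline u$ with $\cid_{\widehat{S_1}}\widehat T<\infty$, obtained by applying Theorem~\ref{thm3.1}(ii) to the flat, essentially of finite type, c.i.\ map $S\to S_1=S[X]_{\mathfrak M}$ coming from the regular factorization of $\sigma$, then passing to completion via the analog of \cite[Proposition (1.13)(2)]{AGP} for $\cid$. The resulting Koszul computation should deliver $\HH_1(K(\sigma))$ free over $\widehat T$.

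For (iii) $\implies$ (iv), I apply Theorem~\ref{thm3.2} with $\sigma$ in place of $\vf$ and $T$ in place of $S$: the hypotheses $\sigma$ essentially of finite type and $\cfd_S T<\infty$ are in force, so the freeness of $\HH_1(K(\sigma))$ yields $\aq_S T\leq 2$, hence $\aq_S T<\infty$. For (iv) $\implies$ (v), since $\sigma$ is essentially of finite type, Avramov's resolution of Quillen's conjecture in this setting (accessed through \cite[Lemma (8.7)]{I} as in the proof of Theorem~\ref{thm3.2}) boosts finiteness of $\aq_S T$ to the sharper bound $\aq_S T\leq 2$.

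To upgrade the chain to equivalences when $S$ and $T$ are complete, I plan to close the cycle by establishing (v) $\implies$ (i). Under completeness, Cohen factorizations of $\sigma$ and $\sigma\vf$ are available without semi-completion, and from $\aq_S T\leq 2$ together with the c.i.\ structure of $\sigma\vf$ one constructs an upper complete intersection quasi-deformation $S\to S^*\from Q$ with $\fd_Q(\Otimes[S]{S^*}{T})<\infty$, witnessing $\cfdd_S T<\infty$. The main obstacle throughout is the freeness step in (ii) $\implies$ (iii): extracting freeness of $\Tor_1^{R''}(\widehat T,S')$ as a $\widehat T$-module from $\cid$-finiteness and the regular-sequence kernel of $R''\to\widehat T$ requires a careful Koszul analysis of the interaction between the ideals $\ker(R'\to\widehat S)\cdot R''$ and $(\underline u)$ inside $R''$.
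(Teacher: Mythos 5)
There are genuine gaps at the two steps that carry the real content. First, in (ii)$\implies$(iii) you obtain $\cid_{S'}T<\infty$ from Theorem~\ref{thm3.1}(ii) exactly as the paper does, but the freeness of $\HH_1(K(\sigma))$ is left as an acknowledged ``careful Koszul analysis'' and never carried out; moreover your intermediate claim that $\HH_1(K(\sigma))$ is a $\widehat{T}$-summand of $\Tor[R'']{1}{\widehat T}{S'}$ is doubtful, since $\HH_1(K(\sigma))$ is computed from a \emph{minimal} generating set of the kernel in a Cohen factorization of $\grave\sigma$, not from generators imported from $\ker(R''\to\widehat T)$. The paper closes this step with no new Koszul computation at all: it applies the surjective-case result \cite[Theorem A (iv)$\implies$(v)]{Sh} to the surjective map $\sigma'\colon S'\to T$ of the regular factorization, using that $\sigma'(\dot\sigma\varphi)=\sigma\varphi$ is c.i.\ at $\fr$, and then identifies $\HH_1(K(\sigma))\cong\HH_1(K(\sigma'))$ by independence of Cohen factorization~\ref{defn130610a}. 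Your outline never invokes that prior theorem, so this implication is not proved.

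Second, (iv)$\implies$(v) as you argue it is not valid: ``$\aq_ST<\infty$ implies $\aq_ST\le 2$'' is not delivered by \cite[Lemma (8.7)]{I} (a coefficient/globalization lemma) nor by Avramov's theorem on Quillen's conjecture, which requires (locally) finite flat dimension of $\sigma$ --- a hypothesis you do not have under (iv) alone. Notice that your argument never uses the standing assumption that $\sigma\varphi$ is c.i.\ at $\fr$, which is precisely what makes the implication true: the paper runs the Jacobi--Zariski sequence for $R\to S\to T$ together with the vanishing coming from the c.i.\ map $\sigma\varphi$ to get $\DD_n(S|R,\ell)=0$ for $n\gg 0$, concludes via \cite[Corollary (4.9), Theorem (6.4)]{AI2} and \cite[Proposition (1.8)]{Av} that $\varphi$ is c.i., and only then bounds $\DD_n(T|S,v)$ and upgrades coefficients by \cite[Lemma (8.7)]{I}. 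Finally, your (v)$\implies$(i) is only asserted (``one constructs an upper complete intersection quasi-deformation''); besides the mismatch of categories ($\cfdd$ needs an \emph{upper} quasi-deformation), the actual construction in the paper is specific: $\aq_ST\le 2$ and the c.i.\ hypothesis on $\sigma\varphi$ force $\DD_2(S|R,\ell)=0$, so $\varphi$ is c.i.\ and a Cohen factorization $R\to R'\to S$ yields the upper quasi-deformation $S\xra{\operatorname{id}}S\xla{\vf'}R'$, with $\fd_{R'}T<\infty$ supplied by Construction~\ref{fact130607c} and \cite[Lemma (3.2)]{AFH1}; none of this is present in your sketch.
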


\begin{proof}
(i)$\implies$(ii) Follows from~\ref{fact130625a}.

(ii)$\implies$(iii).
Assume that $\cidd_S T<\infty$.
Then $\cfd_{S} T<\infty$ by~\ref{fact130625a}. Since $\sigma$ is essentially of finite type,
it has a regular factorization $S\xra{\dot{\sigma}} S'=S[X]_{\mathfrak M}\xra{\sigma'} T$. Since $\dot{\sigma}$ is essentially of finite type and flat with regular closed fiber
and $\cidd_S T<\infty$, it follows from Theorem~\ref{thm3.1}(ii) that $\cid_{S'} T<\infty$.

From~\ref{defn130610a}, we have
$\HH_1(K(\sigma))\cong\HH_1(K(\widehat{\sigma'}))\cong\HH_1(K(\sigma'))$, as $\widehat{T}$-modules. Since $\sigma'(\dot{\sigma}\varphi)=\sigma\varphi$
is c.i.\ at $\fr$ and $\sigma'$ is surjective,
we conclude that $\HH_1(K(\sigma))$ is a free $\widehat{T}$-module by~\cite[Theorem A (iv)$\implies$(v)]{Sh}.

(iii)$\implies$(iv) Follows from Theorem~\ref{thm3.2}.

(iv)$\implies$(v).
Assume that $\aq_S T<\infty$.

Since $\sigma\varphi$ is \emph{c.i.} at $\fr$, from the Jacobi-Zariski exact sequence arising
from diagram $R\xra{\varphi}S\xra{\sigma}T$;~\cite[Proposition (1.8)]{Av} and~\cite[(3.20)]{An2} we have
$\DD_n(S|R,v)\cong\DD_n(S|R,\ell)\otimes_{\ell}v=0$ for $n\gg 0$; and hence $\DD_n(S|R,\ell)=0$ for $n\gg 0$.
Since $\sigma$ is essentially of finite type, using~\cite[Corollary (4.9), Theorem (6.4)(i)$\implies$(v)]{AI2},
~\cite[Proposition (1.8)]{Av},~\cite[(3.20)]{An2},
the Jacobi-Zariski exact sequence arising from the above diagram and~\cite[Lemma (8.7)]{I}, the conclusion $\aq_{S}T\leq 2$ holds.


(v)$\implies$(i). Assume that $S$ and $T$ are complete and $\aq_{S}T\leq 2$. As in the proof of the previous implication, we have
$\DD_2(S|R,\ell)=0$, and hence $\varphi$ is c.i.\ at $\n$ by~\cite[Proposition (1.8)]{Av}.
Using Construction~\ref{fact130607c}, we deduce from~\ref{para2} and~\cite[Lemma (3.2)]{AFH1} that diagram $S\xra{\id}S\xla{\vf'} R'$
is an upper quasi-deformation such that $\fd_{R'}(T\otimes_{S} S)=\fd_{R'}T<\infty$. Therefore $\cfdd_S T<\infty$.
\end{proof}

Our final result recovers~\cite[Theorem A]{Sh} as a consequence of Corollary~\ref{cor3.4}.

\begin{cor}\label{cor3.5}
Assume that $\sigma$ is surjective, and $\sigma\varphi$ is complete intersection at $\fr$. Then the following conditions are equivalent.

\begin{itemize}
\item[(i)]
$\aq_S T<\infty$.
\item[(ii)]
$\aq_S T\leq 2$.
\item[(iii)]
$\ciddim_{S} T<\infty$.
\item[(iv)]
$\cid_S T<\infty$.
\item[(v)]
$\cidim_S T<\infty$, and $\HH_1(K(\sigma))$ is a free $\widehat{T}$-module.
\end{itemize}
\end{cor}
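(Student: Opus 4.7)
The plan is to leverage Corollary~\ref{cor3.4} applied to the surjective (hence essentially of finite type) map $\sigma$ to obtain one direction of the cycle, and then to close the cycle by passing to completion and invoking the complete-case equivalence in that same corollary.

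First I would observe that, because $\sigma$ is surjective, $T$ is finitely generated as an $S$-module. Hence for any quasi-deformation $S\to S'\from Q$, the module $S'\otimes_S T$ is finitely generated over $S'$ and so over $Q$, which forces its $Q$-flat and $Q$-projective dimensions to coincide whenever finite. This yields the identifications $\cfd_S T=\cidim_S T$, $\cidd_S T=\cid_S T$, and $\cfdd_S T=\ciddim_S T$. Under these identifications, conditions (iii), (iv), (v), (i), (ii) of the present corollary correspond in order to conditions (i), (ii), (iii), (iv), (v) of Corollary~\ref{cor3.4}, so the chain $(iii)\Rightarrow(iv)\Rightarrow(v)\Rightarrow(i)\Rightarrow(ii)$ follows immediately from Corollary~\ref{cor3.4}.

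To close the cycle I would prove $(ii)\Rightarrow(iii)$ by reduction to the complete case. Assuming $\aq_S T\leq 2$, \ref{para3} together with~\cite[(3.20)]{An2} yields $\aq_{\widehat S}\widehat T\leq 2$. The completed map $\widehat\sigma\colon\widehat S\to\widehat T$ is surjective (hence essentially of finite type), and a Cohen factorization of $\grave{\sigma\varphi}$ also serves as one for $\widehat{\sigma\varphi}$, so $\widehat\sigma\widehat\varphi$ remains c.i.\ at $\fr\widehat T$. Since both $\widehat S$ and $\widehat T$ are complete, the equivalence clause of Corollary~\ref{cor3.4} gives $\cfdd_{\widehat S}\widehat T<\infty$. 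To descend this to $S$, I would pick an upper quasi-deformation $\widehat S\xra f R'\xla g Q$ witnessing $\cfdd_{\widehat S}\widehat T<\infty$ and pre-compose with the completion $S\to\widehat S$; since $\n\widehat S$ is the maximal ideal of $\widehat S$, the composite $S\to R'$ has the same (regular) closed fibre as $f$, so $S\to R'\xla g Q$ is again an upper quasi-deformation. The identification $\widehat T\cong\widehat S\otimes_S T$, valid because $T$ is finitely generated over $S$, gives $R'\otimes_{\widehat S}\widehat T\cong R'\otimes_S T$, and hence $\fd_Q(R'\otimes_S T)<\infty$. This proves $\cfdd_S T<\infty$, which is (iii).

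The main obstacle I anticipate is the descent step in the last paragraph: one must verify that the base change of an upper quasi-deformation along $S\to\widehat S$ remains an upper quasi-deformation, and that the tensor identification $\widehat T\cong\widehat S\otimes_S T$ is genuinely available. Both facts hinge on the surjectivity of $\sigma$ (via finite generation of $T$ over $S$); without this, the clean reduction of Corollary~\ref{cor3.5} to Corollary~\ref{cor3.4} would break down.
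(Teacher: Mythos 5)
Your proposal is correct and follows essentially the same route as the paper: all implications except (ii)$\implies$(iii) are read off from Corollary~\ref{cor3.4} (using, as you spell out, that finite generation of $T$ over $S$ identifies the flat-type invariants with $\cidim$, $\cid$, $\ciddim$), and (ii)$\implies$(iii) is settled by completing and invoking the complete-case equivalence of Corollary~\ref{cor3.4}. The one substantive difference is the descent step: where the paper simply cites \cite[Fact (2.4)(b)]{Sh}, you verify by hand that pre-composing an upper quasi-deformation of $\widehat{S}$ with $S\to\widehat{S}$ yields an upper quasi-deformation of $S$ with the same closed fibre and with test module $R'\otimes_{\widehat{S}}\widehat{T}\cong R'\otimes_S T$; this is a valid, self-contained replacement for that citation. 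One reference should be adjusted: \ref{para3} together with \cite[(3.20)]{An2} only gives $\DD_n(\widehat{T}|\widehat{S},v)=\DD_n(T|S,v)=0$ for $n\geq 3$ with residue-field coefficients, whereas condition (v) of Corollary~\ref{cor3.4} asks for $\aq_{\widehat{S}}\widehat{T}\leq 2$, i.e.\ vanishing for arbitrary coefficients; since $\widehat{T}\cong\widehat{S}\otimes_S T$ (an isomorphism you establish anyway), flat base change \cite[(4.57)]{An2} --- the reference the paper itself uses --- yields $\DD_n(\widehat{T}|\widehat{S},-)\cong\DD_n(T|S,-)$ and hence the full statement, so this is a citation fix rather than a gap in the argument.
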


\begin{proof}
By Corollary~\ref{cor3.4},
we need only prove the implication (ii)$\implies$(iii).
Assume that $\aq_S T\leq 2$.
From~\ref{para1},~\ref{para2},~\ref{para3}, and~\cite[(4.57)]{An2} we see that $\widehat{\sigma}\grave{\varphi}$ is c.i.\ at $\fr\widehat{T}$ and
$\aq_{\widehat{S}}\widehat{T}\leq 2$. By Corollary~\ref{cor3.4}(v)$\implies$(i) and~\cite[Fact (2.4)(b)]{Sh} the desired condition
$\ciddim_{S} T<\infty$ holds.
\end{proof}

\providecommand{\bysame}{\leavevmode\hbox to3em{\hrulefill}\thinspace}
\providecommand{\MR}{\relax\ifhmode\unskip\space\fi MR }
\providecommand{\MRhref}[2]{%
  \href{http://www.ams.org/mathscinet-getitem?mr=#1}{#2}
}
\providecommand{\href}[2]{#2}

\end{document}